\newcommand{\nc}{\newcommand}
\nc{\dmo}{\DeclareMathOperator}
\nc{\nt}{\newtheorem}
\theoremstyle{remark}
\theoremstyle{definition}
\nc{\Z}{\mathbb{Z}} 
\nc{\R}{\mathbb{R}} 
\nc{\Q}{\mathbb{Q}} 
\nc{\N}{\mathbb{N}} 
\nc{\M}{\mathcal{M}} 
\nc{\Pnm}{\mathcal{P}} 
\nc{\hg}{\mathcal{H}_g} 
\nc{\hh}{\mathcal{H}_2} 
\nc{\cat}{\operatorname{CAT}(0)} 
\nc{\bd}{\partial} 
\nc{\contact}[1]{\mathcal{C}#1} 
\nc{\fcontact}[1]{\hat{\mathcal{C}}#1} 
\nc{\Dtwo}{\mathcal{D}_2} 
\nc{\ND}{\mathcal{ND}_2} 
\nc{\into}{\hookrightarrow} 
\nc{\mcg}{\operatorname{MCG}} 
\nc{\goeritz}{\mathcal{G}_2} 
\nc{\qieq}[1]{\stackrel{#1}{\asymp}} 
\nc{\base}{\operatorname{base}} 
\nc{\margin}[1]{\marginpar{\tiny #1}}
\nc{\p}[1]{\smallskip\noindent{{\bf #1}}}
\nc{\todoC}[1]{\todo[color=blue!30]{#1}} 
\nc{\todoM}[1]{\todo[color=orange!40]{#1}} 
\begin{document}


\title{Purely Pseudo-Anosov Subgroups of the Genus Two Handlebody Group}

\author{Marissa Chesser and Christopher J Leininger}






\begin{abstract}
We prove that finitely generated, purely pseudo-Anosov subgroups of the genus $2$ handlebody group are convex cocompact.
\end{abstract}

\maketitle

\section{Introduction}\label{sec:intro}

Farb and Mosher \cite{FMcc} defined convex cocompactness for subgroups, $G <\mcg(S)$, of the mapping class group of a closed surface, $S$.  Their work, combined with that of Hamenst\"adt~\cite{hamenstadt}, implies that the associated $\pi_1S$--extension group is hyperbolic if and only if $G$ is convex cocompact, (see also Mj-Sardar \cite{MjSardar}).  A convex cocompact subgroup is necessarily finitely generated and purely pseudo-Anosov, (that is, all infinite order elements are pseudo-Anosov), and as part of Gromov's Hyperbolization Question (see Bestvina's problem list \cite[Question~1.1]{bestvinaproblems}), Farb and Mosher \cite[Question~1.5]{FMcc} asked if the converse is true\footnote{In fact, Farb and Mosher only asked their question for free subgroups, but this more general version is now well-known and often attributed to them.}.

Farb and Mosher's question has now been answered affirmatively for several classes of subgroups; see \cite{kentleiningerschleimer,dowdallkentleininger,KobMangTay,Runnels,benagoeritz,LeiningerRussell-3-mfd}.  In addition, it is also known to hold if one further assumes that the subgroup is undistorted; see \cite{convexcocompact}.  Furthermore, there are now also several alternative characterizations of convex cocompactness of subgroups of mapping class groups; see \cite{kentleiningershadows,hamenstadt,DurhamTaylor}.

To describe our results, we let $V_2$ be a genus $2$ handlebody and write $S = \partial V_2$ to denote its boundary.  The {\em genus $2$ handlebody group} is the subgroup $\hh < \mcg(S)$ consisting of isotopy classes of homeomorphisms of $S$ that extend over $V_2$; see \cite{MasurHandle,HH-handlebody1,HH-handlebody2,chesser-stable}.  Our main result answers Farb and Mosher's question affirmatively for subgroups of $\hh$.

\begin{theorem}\label{thm:main}
    If $G$ is a purely pseudo-Anosov, finitely generated subgroup of $\hh$, then $G$ is convex cocompact in the genus 2 mapping class group.
\end{theorem}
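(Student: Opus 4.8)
The plan is to reduce to the characterization of convex cocompactness in terms of the curve complex (Kent--Leininger \cite{kentleiningershadows}, Hamenst\"adt \cite{hamenstadt}): a finitely generated $G<\mcg(S)$ is convex cocompact if and only if some (equivalently, every) orbit map $G\to\mathcal C(S)$ is a quasi-isometric embedding. So, with $G<\hh$ finitely generated and purely pseudo-Anosov as in the statement, the whole task is to promote those hypotheses to this quantitative statement about the orbit of $G$ in $\mathcal C(S)$. I would do this in three stages: analyze the orbit of $G$ inside the hyperbolic complex attached to $V_2$ in the preceding sections (the disk complex $\Dtwo$, or the related contact graph on which the boundary-curve map is better behaved), show that this orbit is quasiconvex, and then transport the conclusion along the $\hh$--equivariant, coarsely Lipschitz boundary-curve map to $\mathcal C(S)$.

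\emph{Step 1: every infinite-order element of $G$ is loxodromic on $\Dtwo$.} Let $g\in G$ have infinite order; then $g$ is pseudo-Anosov on $S$, with attracting/repelling laminations $\lambda^{\pm}\in\mathcal{EL}(S)$. Fix a disk $D$. Since $g$ extends over $V_2$ each $g^{n}(D)$ is again a disk, and since $g$ is pseudo-Anosov $g^{n}(\partial D)\to\lambda^{\pm}$ in $\mathcal C(S)$ as $n\to\pm\infty$; by the description of $\partial\Dtwo$ this forces $\lambda^{\pm}\in\partial\Dtwo$. Thus $g$ fixes two distinct points of $\partial\Dtwo$; as $g$ fixes no multicurve on $S$ it has no invariant collection of disks and so is not elliptic on $\Dtwo$, and therefore acts loxodromically. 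Hence $G$ is a finitely generated group acting on a hyperbolic complex with every infinite-order element loxodromic --- the general setting of the Farb--Mosher question.

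\emph{Step 2 (the crux): the $G$--orbit in $\Dtwo$ is quasi-isometrically embedded with quasiconvex image.} Here ``finitely generated and purely loxodromic'' is not enough by itself, so the three-manifold topology of $V_2$ from the earlier sections must enter. I would use the coarse structure of $\hh$ established there: the word metric of $\hh$ is recovered, through a distance formula, from the projection to $\Dtwo$ together with projections to a controlled list of ``reducible'' coordinates --- annular curve complexes of curves bounding disks, and curve complexes of the pieces cut off by disk systems --- the genus-two list kept short by the central hyperelliptic involution, and none of whose stabilizing subgroups contains a pseudo-Anosov of $S$. Being purely pseudo-Anosov, $G$ meets each such reducible subgroup in a finite group; the real work is to upgrade this to uniform boundedness of the projection of the $G$--orbit to every proper coordinate. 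Granting that, the distance formula identifies the $G$--orbit in $\hh$ with its shadow in $\Dtwo$ up to quasi-isometry, so $G\to\Dtwo$ is a quasi-isometric embedding, and a Morse/stability argument makes its image quasiconvex. \textbf{I expect this uniform boundedness of projections to be the main obstacle:} it is exactly where one must show that reducible subgroups cannot be hidden inside a purely pseudo-Anosov subgroup of $\hh$, using the fine structure of disks in the genus-two handlebody and not merely the ``undistorted'' hypothesis already handled in \cite{convexcocompact}.

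\emph{Step 3: transfer to $\mathcal C(S)$.} Finally I would push forward along the boundary-curve map $\Phi\colon\Dtwo\to\mathcal C(S)$, which is $\hh$--equivariant and coarsely Lipschitz. From the analysis of the ``holes'' of the disk complex in genus two, $\Phi$ quasi-isometrically embeds the quasiconvex subsets of $\Dtwo$ produced in Step 2; alternatively, $\Phi$ carries each $\langle g\rangle$--axis ($g\in G$) to a uniform quasigeodesic fellow-travelling the $\mathcal C(S)$--axis of $g$, which promotes, via hyperbolicity, the Morse property and equivariance, to a quasi-isometric embedding of the whole orbit. Then $G\to\mathcal C(S)$ is a quasi-isometric embedding, and $G$ is convex cocompact in $\mcg(S)$.
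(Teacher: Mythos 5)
There are genuine gaps at both of your crucial steps, and in both cases the paper resolves the difficulty by a different mechanism than the one you sketch. Your Step 2 --- that the orbit map $G \to \Dtwo$ is a quasi-isometric embedding --- is correctly identified as the crux, but you do not prove it: you propose a distance formula for $\hh$ plus ``uniform boundedness of projections to reducible coordinates'' and then say ``granting that.'' Nothing in your outline supplies that uniform bound, and it is not a soft consequence of being purely pseudo-Anosov (finite intersection with reducible subgroups does not by itself bound orbit projections). The paper's actual route is combinatorial and avoids subsurface projections at this stage entirely: it uses the Hamenst\"adt--Hensel theorem that the graph $\Pnm$ of pants decompositions by non-separating meridians is a \emph{tree}, shows a torsion-free purely pseudo-Anosov $G$ acts freely on it (so $G$ is free and quasi-isometric to its minimal subtree $\Pnm_G$), shows the cone-off $\hat\Pnm$ of $\Pnm$ along the subtrees $\Pnm(\alpha)$ is $\hh$-equivariantly quasi-isometric to $\ND$ and hence to $\Dtwo$, and then proves a bounded-intersection lemma ($\operatorname{diam}(\Pnm_G\cap\Pnm(\alpha))\le M$ uniformly in $\alpha$, via a pigeonhole argument using pseudo-Anosovness) to conclude that $\Pnm_G\into\hat\Pnm$ is a quasi-isometric embedding. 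That chain is what actually yields your Step 2, and none of it appears in your proposal.

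Your Step 3 is where the argument would actually fail as written. The inclusion $\Dtwo\into\mathcal{C}(S)$ is badly distorted (the disk set is not quasiconvex in the curve graph), so a quasi-isometric embedding of $G$ into $\Dtwo$ cannot be pushed forward to $\mathcal{C}(S)$ by equivariance, coarse Lipschitzness, or fellow-travelling of individual axes; ``uniform'' quasigeodesity of the images of all axes is essentially equivalent to the conclusion you are trying to prove. The paper deliberately does not pass through $\mathcal{C}(S)$ at all. Instead it compares the Masur--Schleimer distance formula for $\Dtwo$ (a sum over witnesses $\mathbf{W}$) with the Masur--Minsky distance formula for $\mcg(S)$ (a sum over all subsurfaces $\mathbf{X}\supset\mathbf{W}$) to get $d_{\Dtwo}(\alpha,g\alpha)\lesssim d_{\mcg(S)}(1,g)$, hence $G$ is \emph{undistorted} in $\mcg(S)$, and then invokes the theorem of Bestvina--Bromberg--Kent--Leininger that a finitely generated, undistorted, purely pseudo-Anosov subgroup is convex cocompact. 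Ironically, you explicitly set aside the undistortion criterion as ``not enough''; it is precisely the tool that closes the argument, and your proposal has no working substitute for it.
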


Since the genus 2 Goeritz group $\goeritz$ is a subgroup of $\hh$, Theorem \ref{thm:main} also provides an alternative proof of Tshishiku's theorem, \cite[Theorem~A]{benagoeritz}.
\begin{cor}[Tshishiku]\label{cor:goeritz}
    Finitely generated, purely pseudo-Anosov subgroups of the genus 2 Goeritz group $\goeritz$ are convex cocompact in the genus two mapping class group $\mcg(\bd V_2)$. 
\end{cor}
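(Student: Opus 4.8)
The plan is to deduce Corollary \ref{cor:goeritz} immediately from Theorem \ref{thm:main}, the only point requiring comment being that $\goeritz$ genuinely sits inside $\hh$. Recall that the genus two Goeritz group is the mapping class group of the genus two Heegaard splitting $S^3 = V_2 \cup_S V_2'$: an element of $\goeritz$ is represented by a homeomorphism of $S^3$ preserving the Heegaard surface $S = \bd V_2$, and one records its restriction to $S$. Since such a homeomorphism carries each complementary handlebody to itself, its restriction to $S$ extends over $V_2$ and hence represents an element of $\hh < \mcg(S)$. This yields the inclusion $\goeritz \into \hh$ asserted in the introduction.

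Now let $G < \goeritz$ be finitely generated and purely pseudo-Anosov. Whether an element of $\mcg(S)$ has finite order or is pseudo-Anosov is an intrinsic property of the mapping class of $S$ and does not depend on which subgroup one views it in; consequently, transporting $G$ across the inclusion $\goeritz \into \hh$ produces a finitely generated, purely pseudo-Anosov subgroup of $\hh$. Theorem \ref{thm:main} applies directly and gives that $G$ is convex cocompact in $\mcg(S) = \mcg(\bd V_2)$, which is precisely the assertion of the corollary.

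There is no real obstacle: the mathematical content lies entirely in Theorem \ref{thm:main}, and the corollary is a formal consequence. The only thing worth spelling out is the compatibility of the two uses of the phrase ``purely pseudo-Anosov'' — once for subgroups of $\goeritz$ and once for subgroups of $\hh$ — which holds because both refer to the Nielsen--Thurston type of the underlying mapping class of $S$, together with the (standard) inclusion $\goeritz \into \hh$.
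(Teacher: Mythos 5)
Your proposal is correct and matches the paper's argument exactly: the corollary is stated as an immediate consequence of Theorem~\ref{thm:main} via the inclusion $\goeritz < \hh$, which is precisely your route. Your extra paragraph justifying why a Goeritz element restricts to a homeomorphism extending over $V_2$ is a reasonable elaboration of a fact the paper simply cites as known.
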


The assumption in Theorem~\ref{thm:main} that we are in the setting of the genus 2 handlebody group is crucial: we use and expand on the tools developed by Hamenst\"adt and Hensel in \cite{HH-handlebody2}, as well as the work of the first author in \cite{chesser-stable}, which are all specific to the genus 2 case.  See the proof below for details.


\subsection{Acknowledgements}\label{subsec:thanks} The authors would like to thank Jacob Russell for helpful conversations, and Dan Margalit, Sam Taylor, and the anonymous referee for comments on earlier versions of the paper that helped improve the exposition.  The second author was partially supported by NSF DMS-2305286.


\section{Proof of Theorem~\ref{thm:main}}\label{sec:main}

Throughout what follows, we let $S = \bd V_2$.  Any subgroup $G < \mcg(S)$ has a finite index, torsion free subgroup; see Ivanov \cite{Ivanov}.  Passing to a finite index subgroup does not change the property of being convex cocompact, and so to prove Theorem~\ref{thm:main} for $G<\hh$ we may assume, without loss of generality, that $G$ is torsion free.

The proof of Theorem~\ref{thm:main} is roughly divided into three steps.   In \S\ref{S:P and hat P} we recall the construction of a tree $\Pnm$ on which $\hh$ acts, due to Hamenst\"adt and Hensel \cite{HH-handlebody2}, and prove that after adding edges appropriately, we obtain a graph, $\hat{\Pnm}$, quasi-isometric to the disk graph (Proposition~\ref{prop:conediskqi}).  Next, in \S\ref{S:action on P hat P} we analyze the action of a finitely generated, torsion free, purely pseudo-Anosov subgroup $G<\hh$ on $\Pnm$ and $\hat{\Pnm}$, and prove that the orbit map to $\hat{\Pnm}$ is a quasi-isometric embedding (Proposition~\ref{prop:subtreeqi}).  From these two facts, we deduce that the orbit map of such a group $G$ to the disk graph is a quasi-isometric embedding (Corollary~\ref{cor:orbitqie}), and in \S\ref{S:distance formulas} use this, together with the distance formulas of Masur-Minsky and Masur-Schleimer to deduce that $G$ must be quasi-isometrically embedded in the the $\mcg$ (Proposition~\ref{prop:undistorted}).  This immediately proves the Theorem~\ref{thm:main} by appealing to a result of Bestvina, Bromberg, Kent, and the second author \cite{convexcocompact}.

\subsection{A model for the disk graph}\label{S:P and hat P}
In \cite{HH-handlebody2}, Hamenst\"adt and Hensel prove that $\hh$ is a $\rm{CAT}(0)$ group, along the way providing several tools to study the geometry of this group.  One of these tools is the action on a certain full subgraph $\Pnm$ of the {\em pants graph} of $S$: the graph whose vertices are isotopy classes of pants decompositions with edges between those that differ by an {\em elementary move} (see e.g.~\cite{Margalit}).  Specifically, the subgraph $\Pnm$ is spanned by pants decompositions of $S$ consisting of only {\em non-separating meridians}, that is, isotopy classes of non-separating essential curves in $S$ that bound disks in $V_2$.  A key fact we need is the following result of Hamenst\"adt-Hensel \cite[Theorem~5.15]{HH-handlebody2}.

\begin{theorem} \label{T:it's a tree!}
The graph $\Pnm$ is a tree.
\end{theorem}

Given a non-separating meridian $\alpha$, let $\Pnm(\alpha) \subset \Pnm$ denote the subgraph spanned by pants decompositions containing $\alpha$.  The next fact was originally sketched by Hamenst\"adt-Hensel \cite[Corollary~5.18]{MR4255086} and made explicit by the first author in \cite[Lemma~3.1]{chesser-stable}.
\begin{lemma} \label{L:subgraph is a tree}
For any $\alpha$, $\Pnm(\alpha)$ is a subtree of $\Pnm$. \qed
\end{lemma}

We define the {\em electrification}, $\hat{\Pnm}$, to be the graph obtained from $\Pnm$ by, for each non-separating meridian $\alpha$ on $S$, adding a new edge between every pair of vertices in $\Pnm(\alpha)$; in particular, $\Pnm$ and $\hat{\Pnm}$ have the same vertex sets.
The importance of the graph $\hat{\Pnm}$ is seen in the next proposition.  Let $\Dtwo$ denote \emph{disk graph} of $V_2$, ie the subgraph of the curve graph of $S$ spanned by curves that bound a disk in $V_2$.  That is, $\Dtwo$ is the graph whose vertices are meridians, so that two vertices span an edge if and only if the associated meridians have disjoint representatives on $S$. The inclusion of the subgraph, $\ND\subset \Dtwo$ spanned by non-separating meridians was shown to be a quasi-isometry by the first author \cite[Proposition~5.1]{chesser-stable}.

Given a group $H$ acting on a set $X$ and metric space $(Y,d_Y)$, a map $\Psi \colon X \to (Y,d_Y)$ is said to be {\em coarsely $H$--equviariant} if
\[ \sup \{d_Y(h \cdot \Psi(x),\Psi(h \cdot x)) \mid x \in X, h \in H\} < \infty.\]  
That is, $\Psi$ is $H$--equivariant, up to a bounded metric error in $Y$.

\begin{prop}\label{prop:conediskqi}
    The electrification $\hat{\Pnm}$ is coarsely $\hh$-equivariantly quasi-isometric to $\ND$, and hence to $\Dtwo$.
\end{prop}
\begin{proof} 
Construct a map $\Phi \colon \ND^0 \to \Pnm^0 = \hat \Pnm^0$ so that for any vertex $\alpha$, $\Phi(\alpha)$ is some vertex of $\Pnm(\alpha)$.

Given $g \in \hh$ and $\alpha \in \ND^0$, it follows that $\Phi(g \cdot \alpha) \in \Pnm(g \cdot \alpha)^0$. 
Similarly, $g \cdot \Phi(\alpha) \in g \cdot \Pnm(\alpha)^0 = \Pnm(g \cdot \alpha)^0$.  Since both $g \cdot \Phi(\alpha)$ and $\Phi(g \cdot \alpha)$ are in $\Pnm(g \cdot \alpha)^0$, 
these vertices are therefore connected by an edge in $\hat \Pnm$. Thus
\[ \hat d(g \cdot \Phi(\alpha), \Phi(g \cdot \alpha)) \leq 1,\] proving that $\Phi$ is coarsely $\hh$--equivariant.  

Next, suppose $\alpha,\beta \in \ND^0$ are connected by an edge.  This implies that $\alpha,\beta$ are part of pants decomposition $P$ of non-separating disk curves, and hence $P \in \Pnm(\alpha) \cap \Pnm(\beta) \neq \emptyset$.  Thus $\Phi(\alpha)$ and $\Phi(\beta)$ are both connected by an edge of $\hat \Pnm$ to $P$, implying
\[ \hat d(\Phi(\alpha),\Phi(\beta)) \leq 2.\]
It follows that $\Phi$ is $2$--Lipshitz to $\hat \Pnm$.

To prove the lower bound, first note that for any two $P,P' \in \hat \Pnm^0$ connected by an edge of $\hat \Pnm$, there is a curve $\gamma$ in common to both $P$ and $P'$ (regardless of whether the edge is one of the original edges of $\Pnm$ or an added edge in $\hat \Pnm$).
Now suppose $\alpha,\beta \in \ND^0$, and $\hat d(\Phi(\alpha),\Phi(\beta)) = n$.  Connect these points by a geodesic in $\hat \Pnm$ with consecutively ordered indexed vertices
\[ \Phi(\alpha) = P_0,P_1,\ldots,P_n = \Phi(\beta). \]
Because $P_{i-1}$ and $P_i$ are connected by an edge of $\hat \Pnm$, for $i=1,\ldots,n$ there is a curve $\gamma_i$ that is in both of these pants decompositions.  In particular, $\gamma_i$ and $\gamma_{i+1}$ are both in $P_i$, and are therefore disjoint.  Similarly, $\alpha$ and $\gamma_1$ are disjoint, as are $\beta$ and $\gamma_n$.  Thus,
\[ \alpha,\gamma_1,\ldots,\gamma_n,\beta\]
are the consecutively ordered vertices of a path in $\ND$, and hence
\[ d(\alpha,\beta) \leq n+1 = \hat d(\Phi(\alpha),\Phi(\beta)) + 1.\]

Combining the two inequalities, we see that $\Phi$ is a $(2,1)$--quasi-isometric embedding.  Since every vertex $P$ in $\hat \Pnm^0$ is in some $\Pnm(\alpha)^0$, it has distance at most $1$ from $\Phi(\alpha)$. Thus $\Phi$ is coarsely surjective, and therefore a $(2,1)$--quasi-isometry. As already noted, the inclusion of $\ND$ into $\Dtwo$ is an $\hh$--equivariant quasi-isometry, by \cite[Proposition~5.1]{chesser-stable}, and so this completes the proof.
\end{proof}

\subsection{Quasi-isometric orbit map} \label{S:action on P hat P}

We now consider a finitely generated, purely pseudo-Anosov subgroup $G< \hh$.  As mentioned above, for our purposes we need only consider the case that $G$ is torsion free.

\begin{lemma}\label{lem:free}
    Suppose $G <\hh$ is a torsion free, finitely generated, purely pseudo-Anosov subgroup.  Then the action of $G$ on $\Pnm$ is free.
\end{lemma}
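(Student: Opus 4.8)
The plan is to prove the stronger statement that every nontrivial element of $G$ acts on the tree $\Pnm$ with no fixed vertex and no inverted edge, which is precisely what it means for a simplicial action on a tree to be free. First I would make explicit that $\hh$ (and hence $G$) acts simplicially on $\Pnm$: a homeomorphism of $S$ that extends over $V_2$ carries essential disks in $V_2$ to essential disks and non-separating curves to non-separating curves, so it permutes the set of non-separating meridians; since it also preserves the pants-graph adjacency relation, this restricts to a simplicial action on $\Pnm$, which is a tree by \cite[Theorem~5.15]{HH-handlebody2}. (Note that finite generation plays no role in this argument; it is part of the hypothesis only because it is needed elsewhere.)

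Next I would argue by contradiction. Suppose $g \in G$ is nontrivial and stabilizes a vertex $P \in \Pnm$, i.e.\ $g(P) = P$ as a pants decomposition. Then $g$ permutes the three isotopy classes of curves making up $P$, so some nonzero power $g^k$ fixes the isotopy class of each component of $P$; in particular $g^k$ fixes the isotopy class of an essential simple closed curve in $S$. Since $G$ is torsion free, $g$ has infinite order and $g^k \neq 1$, and since $G$ is purely pseudo-Anosov, $g^k$ is pseudo-Anosov. But a pseudo-Anosov mapping class of the closed surface $S$ has no periodic essential simple closed curves, a contradiction. Hence no nontrivial element of $G$ fixes a vertex of $\Pnm$.

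Finally I would rule out edge inversions: an element of $G$ inverting an edge of $\Pnm$ would have order two, contradicting torsion-freeness (equivalently, its square would fix both endpoints of the edge, reducing to the previous paragraph). Combining these, the action of $G$ on $\Pnm$ is free.

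I do not anticipate a real obstacle here; the only points needing care are verifying that $\hh$ genuinely acts simplicially on $\Pnm$ (so that the tree structure of \cite{HH-handlebody2} applies) and invoking the standard fact that a pseudo-Anosov element of $\mcg(S)$ fixes no isotopy class of essential simple closed curve. Both are routine once the definitions are unwound, which is presumably why the lemma is described as well known.
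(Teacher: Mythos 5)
Your proof is correct and takes essentially the same approach as the paper's: a nontrivial element fixing a vertex preserves a pants decomposition, hence is reducible, contradicting the torsion-free and purely pseudo-Anosov hypotheses. Your explicit treatment of edge inversions (which the paper leaves implicit, though it is needed to later conclude that $G$ is free) is a reasonable extra detail rather than a different argument.
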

\begin{proof}  
Since the vertices of $\Pnm$ are pants decompositions (by disk curves), any element of $\hh$ that fixes a vertex must be reducible. However, since $G$ is torsion free and purely pseudo-Anosov, every element of $G$ must be pseudo-Anosov and thus cannot be reducible. It follows that no nontrivial element fixes a vertex, and so the action of $G$ is free.
\end{proof}

For $G<\hh$ as in the preceding lemma, the action on $\Pnm$ has a minimal invariant subtree $\Pnm_G \subset \Pnm$, which is the union of the axes of all nontrivial elements; see e.g.~Bestvina's survey \cite{Bestvina-Trees}.
From the freeness of this action, we easily deduce the following ({\em cf.}~\cite[Lemma~3.3]{LeiningerRussell-3-mfd}).

\begin{prop} \label{prop:free-qi}
Suppose $G< \hh$ is a torsion free, finitely generated, purely pseudo-Anosov subgroup.  Then $\Pnm_G \subset \Pnm$ is locally finite and $\Pnm_G/G$ is a finite graph.  Furthermore, $G$ is free and any orbit map $G \to \Pnm_G$ is a quasi-isometry.
\end{prop}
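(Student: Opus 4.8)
The plan is to run the standard ``minimal subtree / core graph'' argument, using the freeness of the action (Lemma~\ref{lem:free}) in an essential way: since $\Pnm$ itself need not be locally finite, it is precisely the passage to $\Pnm_G$ together with the triviality of stabilizers that makes everything work, and this is where the hypotheses enter.

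First I would record the two easy consequences of freeness. Since $G$ acts freely on the tree $\Pnm$ it acts freely on the invariant subtree $\Pnm_G$, and since $G$ is torsion free there are no inversions; hence $G$ is a free group by the structure theory of groups acting on trees (Serre, \cite{Bestvina-Trees}). For cocompactness, fix a basepoint $v_0\in\Pnm_G$ and a finite symmetric generating set $g_1,\dots,g_k$ of $G$, and set $Y=\bigcup_{i=1}^k[v_0,g_iv_0]$. This is a finite subcomplex of $\Pnm_G$, since $\Pnm_G$ is a subtree and therefore contains the geodesic between any two of its vertices. Now $GY$ is $G$--invariant by construction, and it is connected: the translates $gY$ and $gg_iY$ share the vertex $gg_iv_0$, so a routine induction on word length shows $GY$ contains an edge path from $v_0$ to $gv_0$ for every $g\in G$. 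Thus $GY$ is a $G$--invariant subtree, so by minimality of $\Pnm_G$ we get $\Pnm_G\subseteq GY$, while $Y\subseteq\Pnm_G$ gives $GY\subseteq\Pnm_G$; hence $GY=\Pnm_G$. Consequently the finite subcomplex $Y$ surjects onto $\Pnm_G/G$, so $\Pnm_G/G$ is a finite graph.

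Next I would deduce local finiteness from cocompactness and freeness. Fix a vertex $v$ of $\Pnm_G$ and choose lifts $e_1,\dots,e_m$ of the (finitely many) edges of $\Pnm_G/G$, so that every edge of $\Pnm_G$ lies in some orbit $Ge_j$. If $ge_j$ is an edge incident to $v$, then the vertex $g^{-1}v$ is one of the two endpoints of $e_j$; since vertex stabilizers are trivial, $g$ is determined by which endpoint of $e_j$ it carries to $v$. Hence at most $2m$ edges of $\Pnm_G$ are incident to $v$, so $\Pnm_G$ is locally finite.

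Finally, the quasi-isometry statement is the Milnor--\v{S}varc lemma. As a locally finite tree, $\Pnm_G$ is a proper geodesic metric space; $G$ acts on it by simplicial isometries, cocompactly by the second paragraph, and properly discontinuously because for any $x\in\Pnm_G$ and $R>0$ the finite subtree $B(x,R)\cap\Pnm_G$ contains finitely many vertices and each is the image of $x$ under at most one element of $G$, so $\{g\in G : d(x,gx)\le R\}$ is finite. Milnor--\v{S}varc then yields that every orbit map $G\to\Pnm_G$ is a quasi-isometry. I expect the only real content to be the cocompactness step --- that minimality of $\Pnm_G$ together with finite generation forces the quotient to be finite --- while freeness (hence the standing hypotheses, via Lemma~\ref{lem:free}) is exactly what upgrades this to local finiteness and a properly discontinuous action.
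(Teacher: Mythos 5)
Your proposal is correct and follows essentially the same route as the paper: build a finite subtree from geodesics joining a basepoint to its translates under the generators, use minimality of $\Pnm_G$ to see its $G$--translates cover $\Pnm_G$ (hence the quotient is finite), deduce local finiteness and freeness of $G$ from the free action, and conclude with Milnor--\v{S}varc. You simply spell out in more detail the cocompactness, no-inversion, and local-finiteness steps that the paper compresses into one or two sentences.
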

\begin{proof}  Taking a base point in $\Pnm_G$ and considering the convex hull of it's translates by a finite set of generators, we obtain a finite subtree of $\Pnm_G$ whose translates cover $\Pnm_G$, and hence projects onto $\Pnm_G/G$, proving this quotient is a finite graph.  The free action is a covering space action and thus $\Pnm_G$ is locally finite and $G$ is free.  Finally, any orbit map $G \to \Pnm_G$ is a quasi-isometry by Milnor-\v{S}varc Lemma; see e.g.~\cite[Ch.~I.8]{bridsonhaefliger}.
\end{proof}

The next proposition relates distances in $G$, or equivalently in $\Pnm_G$, with those in $\hat \Pnm$.
\begin{prop}\label{prop:subtreeqi}
    For any torsion free, finitely generated, purely pseudo-Anosov $G < \hh$, the inclusion $\iota:\Pnm_G\into \hat{\Pnm}$ is a $G$-equivariant quasi-isometric embedding.
\end{prop}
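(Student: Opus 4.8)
My plan is to prove the required two‑sided distance estimate between $\hat d$ and the path metric $d_{\Pnm_G}$ on $\Pnm_G$ directly, with the hard half resting on a uniform bound for the size of the sets $\Pnm_G\cap\Pnm(\alpha)$. The inclusion $\iota$ is automatically $G$‑equivariant, since $G$ preserves both $\Pnm_G$ and $\hat\Pnm$, so the content is the quasi‑isometric embedding. The upper bound $\hat d(x,y)\le d_{\Pnm_G}(x,y)$ for $x,y\in\Pnm_G$ is immediate: $\Pnm_G$ is a subtree of the tree $\Pnm$, hence convex, so $\Pnm_G\into\Pnm$ is isometric, and $\hat\Pnm$ is obtained from $\Pnm$ only by adding edges, so the identity on vertices does not increase distances. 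All the work is in the reverse inequality $d_{\Pnm_G}(x,y)\le D\,\hat d(x,y)$ for a constant $D$ depending only on $G$.

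To obtain the reverse inequality I would first reduce it to the following claim: \emph{there is a constant $D$, depending only on $G$, so that $\operatorname{diam}_{\Pnm_G}(\Pnm_G\cap\Pnm(\alpha))\le D$ for every non‑separating meridian $\alpha$.} Granting the claim, fix vertices $x,y\in\Pnm_G$, choose a geodesic $x=P_0,P_1,\dots,P_n=y$ in $\hat\Pnm$ with $n=\hat d(x,y)$, and let $\pi\colon\Pnm\to\Pnm_G$ be nearest‑point projection onto the convex subtree $\Pnm_G$; then $\pi$ is $1$‑Lipschitz, fixes $x$ and $y$, and $d_{\Pnm_G}(x,y)\le\sum_{i=1}^n d_{\Pnm_G}(\pi(P_{i-1}),\pi(P_i))$. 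When $P_{i-1}P_i$ is an original edge of $\Pnm$ the $i$‑th term is at most $1$; when it is an added edge, $P_{i-1},P_i$ both lie in some $\Pnm(\alpha_i)$, so $\pi(P_{i-1}),\pi(P_i)\in\pi(\Pnm(\alpha_i))$, and the standard description of projections in trees (if $\Pnm(\alpha_i)\cap\Pnm_G\ne\emptyset$ then $\pi(\Pnm(\alpha_i))=\Pnm(\alpha_i)\cap\Pnm_G$, and otherwise $\pi(\Pnm(\alpha_i))$ is a single point) bounds the $i$‑th term by $\operatorname{diam}_{\Pnm_G}(\Pnm_G\cap\Pnm(\alpha_i))\le D$. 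Hence $d_{\Pnm_G}(x,y)\le\max(1,D)\,\hat d(x,y)$, finishing this direction.

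The claim is where the hypotheses enter, and I would prove it by contradiction. First, $\Pnm_G\cap\Pnm(\alpha)$ is an intersection of two subtrees of $\Pnm$ — using that $\Pnm(\alpha)$ is a subtree \cite[Lemma~3.1]{chesser-stable} — hence is itself a (possibly empty) subtree; if its diameter exceeded $3\,|E(\Pnm_G/G)|$ it would contain an embedded geodesic segment $[x,y]$ of that length, every vertex of which is a pants decomposition of $S$ containing $\alpha$. The segment $[x,y]$ traverses that many \emph{distinct} edges of $\Pnm_G$, so mapping down to the finite graph $\Pnm_G/G$ (Proposition~\ref{prop:free-qi}) the pigeonhole principle gives four of them, say $e_1,\dots,e_4$, in a single $G$‑orbit. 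Because the $G$‑action on $\Pnm_G$ is free (Lemma~\ref{lem:free}) and $G$ is torsion free, edge stabilizers are trivial, so there are \emph{distinct} $g_1,\dots,g_4\in G$ with $g_t e_1=e_t$. Fix an endpoint $v$ of $e_1$; then $g_t v$ is an endpoint of $e_t$, hence a vertex of $[x,y]$, so the pants decomposition $g_t v$ contains $\alpha$, i.e.\ $g_t^{-1}\alpha$ is one of the three curves of the pants decomposition $v$. By pigeonhole $g_s^{-1}\alpha=g_t^{-1}\alpha$ for some $s\ne t$, and then $g_t g_s^{-1}$ is a nontrivial element of $G$ fixing the curve $\alpha$ — impossible, since a nontrivial element of a torsion‑free purely pseudo‑Anosov group is pseudo‑Anosov and so fixes no curve. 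Thus $\operatorname{diam}_{\Pnm_G}(\Pnm_G\cap\Pnm(\alpha))\le 3\,|E(\Pnm_G/G)|$ for every $\alpha$.

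The reduction steps — the isometric embedding $\Pnm_G\into\Pnm$, the coarse non‑expansion from adding edges, and the behaviour of nearest‑point projection onto a subtree — are routine tree geometry. The main obstacle, and the only place the pseudo‑Anosov hypothesis and the tree structure of $\Pnm$ are used, is the uniform diameter bound in the third paragraph, so I would expect the care in the write‑up to be concentrated there: namely in checking that the four $G$‑equivalent edges produce genuinely distinct group elements (triviality of edge stabilizers) and that the long geodesic segment stays inside $\Pnm_G\cap\Pnm(\alpha)$ (convexity of subtrees).
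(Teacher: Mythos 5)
Your proposal is correct and follows essentially the same route as the paper: the heart in both cases is the uniform bound on $\operatorname{diam}\bigl(\Pnm_G\cap\Pnm(\alpha)\bigr)$ (the paper's Lemma~\ref{lem:boundedint}), proved by the same pigeonhole argument combining the finiteness of $\Pnm_G/G$, freeness of the action, and the fact that a nontrivial element of $G$ fixing a curve would be reducible rather than pseudo-Anosov. The only substantive difference is in packaging: where you use nearest-point projection onto the convex subtree $\Pnm_G$ to convert the diameter bound into the lower Lipschitz bound, the paper instead concatenates $\Pnm$-geodesics joining consecutive vertices of the $\hat{\Pnm}$-geodesic and explicitly removes backtracking to produce a $\Pnm_G$-geodesic cut into at most $\hat d(A,B)$ pieces, each of length at most $M$.
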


To prove this proposition, we will need to bound the size of $\Pnm_G \cap \Pnm(\alpha)$, independent of $\alpha$.

\begin{lemma}\label{lem:boundedint}
    There exists an $M>0$ such that for any non-separating meridian $\alpha$, the intersection $\Pnm_G\cap \Pnm(\alpha)$ has diameter at most $M$.
\end{lemma}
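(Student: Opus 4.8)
The plan is to argue by contradiction, using three ingredients: every relevant subgraph of $\Pnm$ is a tree, the quotient $\Pnm_G/G$ is a \emph{finite} graph (Proposition~\ref{prop:free-qi}), and no nontrivial element of $G$ can fix an essential simple closed curve. I will show $M$ may be taken to depend only on $K := |(\Pnm_G/G)^0|$, the number of vertices of $\Pnm_G/G$.

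First I would set up the tree geometry. Both $\Pnm_G$ (a minimal $G$--invariant subtree) and $\Pnm(\alpha)$ (by \cite[Lemma~3.1]{chesser-stable}) are convex subtrees of the tree $\Pnm$, so their intersection $\Pnm_G\cap\Pnm(\alpha)$ is again a subtree. Consequently, if $\Pnm_G\cap\Pnm(\alpha)$ has diameter at least $N$, then there is a geodesic $v_0,v_1,\ldots,v_N$ in $\Pnm$ with every $v_i\in\Pnm_G\cap\Pnm(\alpha)$; in particular each $v_i$ is a pants decomposition of $S$ that contains the curve $\alpha$, and a pants decomposition of the genus two surface $S$ has exactly $3$ components.

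Now suppose for contradiction that $\Pnm_G\cap\Pnm(\alpha)$ has diameter at least $3K$ for some non-separating meridian $\alpha$, and take a geodesic $v_0,\ldots,v_{3K}$ as above. It has $3K+1$ vertices, so by the pigeonhole principle at least four of them, say $v_{j_1},\ldots,v_{j_4}$, lie in a single $G$--orbit; write $v_{j_i}=g_i\cdot v$ for a fixed vertex $v\in\Pnm_G$ and $g_i\in G$, where the $g_i$ are distinct because $G$ acts freely on $\Pnm$ (Lemma~\ref{lem:free}). Since $v_{j_i}$ contains $\alpha$, the pants decomposition $v=g_i^{-1}\cdot v_{j_i}$ contains $g_i^{-1}\cdot\alpha$. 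But $v$ has only $3$ components, so two of the four meridians $g_1^{-1}\cdot\alpha,\ldots,g_4^{-1}\cdot\alpha$ must coincide, say $g_i^{-1}\cdot\alpha=g_j^{-1}\cdot\alpha$ with $i\neq j$. Then $g_jg_i^{-1}\in G$ fixes the curve $\alpha$, hence is not pseudo-Anosov (a pseudo-Anosov fixes no isotopy class of essential simple closed curve); since $G$ is purely pseudo-Anosov, $g_jg_i^{-1}$ must therefore have finite order, and since $G$ is torsion free we conclude $g_i=g_j$, contradicting distinctness. Hence the diameter of $\Pnm_G\cap\Pnm(\alpha)$ is at most $3K-1$ for every non-separating meridian $\alpha$, and we may take $M=3K-1$.

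I do not expect a serious obstacle here; the only points needing care are the tree--convexity that forces an \emph{entire} $\Pnm$--geodesic into $\Pnm_G\cap\Pnm(\alpha)$, and the bookkeeping by which "large diameter" yields "four vertices in one $G$--orbit," which is exactly where the finiteness of $\Pnm_G/G$ from Proposition~\ref{prop:free-qi} enters. Note that local finiteness of $\Pnm_G$ is not required. (Alternatively, one could pass to a subsequence of a hypothetical family of over-long geodesics to fix their initial vertex $x$, observe that $\alpha$ is then forced to be one of the three curves of $x$, and run the same pigeonhole along a resulting ray; the version above simply avoids subsequences.)
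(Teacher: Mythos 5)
Your proposal is correct and follows essentially the same route as the paper: both use the finiteness of $\Pnm_G/G$ together with the subtree structure to pigeonhole four vertices of a long geodesic in $\Pnm_G\cap\Pnm(\alpha)$ into a single $G$--orbit, and then use that a genus two pants decomposition has only three components plus the fact that a nontrivial element of a torsion-free, purely pseudo-Anosov group cannot fix a curve. The only cosmetic difference is that you pigeonhole once on the four curves $g_i^{-1}\cdot\alpha$ inside the three-element set $v$, whereas the paper splits into the case $g_i(\alpha)=\alpha$ and a second pigeonhole on which component of $P$ maps to $\alpha$; your bookkeeping is slightly tidier but the argument is the same.
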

\begin{proof}  We show that if the diameter of $\Pnm_G \cap \Pnm(\alpha)$ is too large, then $G$ must contain a reducible element.  Toward that end, let $N$ denote the number of vertices of $\Pnm_G/G$, which is the number of $G$--orbits of vertices of $\Pnm_G$.  We will show that $M = 3N$ is a bound for the diameter of $\Pnm_G \cap \Pnm(\alpha)$. 
 We therefore assume the diameter of $\Pnm_G \cap \Pnm(\alpha)$ is at least $3N+1$ for some $\alpha$ and arrive at a contradiction.

By the pigeonhole principle, there is a vertex $P \in \Pnm_G \cap \Pnm(\alpha)$ with $$|G \cdot P \cap \Pnm(\alpha)| \geq 4$$.  Thus there are three distinct, nontrivial elements $g_1,g_2,g_3 \in G$ so that
\[ P,g_1P,g_2P,g_3P \in \Pnm(\alpha).\]
Since $G$ is torsion free and purely pseudo-Anosov, each $g_i$ is pseudo-Anosov.
Writing \\$P = \{\alpha,\beta_1,\beta_2\}$, it follows that
\[ \alpha \in \{g_i(\alpha),g_i(\beta_1),g_i(\beta_2)\},\]
for all $i =1,2,3$. If $g_i(\alpha) = \alpha$ for some $i$, then $g_i$ is reducible, a contradiction.  Therefore, for each $i=1,2,3$, there is a $j(i) \in \{1,2\}$ so that $\alpha = g_i(\beta_{j(i)})$.  By the pigeon-hole principle again, there are $i \neq i'$ so that $j(i) = j(i')$.  Reindexing, we may assume $j(1) = j(2)=1$, so that $g_1(\beta_1) = g_2(\beta_1) = \alpha$, and hence
\[ g_1^{-1}g_2(\beta_1)=g_1^{-1}(\alpha) = \beta_1.\]
Therefore, $g_1^{-1}g_2$ is reducible, Since $g_1 \neq g_2$, $g_1^{-1}g_2$ is a nontrivial element of $G$, hence pseudo-Anosov, which is another contradiction.  

We can thus conclude that the diameter of $\Pnm_G \cap \Pnm(\alpha)$ is less than $3N+1$.
\end{proof}

\begin{proof}[Proof of Proposition~\ref{prop:subtreeqi}.]
Since $\Pnm_G$ is a subgraph, the inclusion is $1$--Lipschitz. It remains to prove the lower bound. Toward this end, suppose $A$ and $B$ are vertices in $\Pnm_G$ connected by a geodesic in $\hat{\Pnm}$ with consecutively ordered indexed vertices
\[ A=P_0,P_1, \dots,P_n=B. \]
For each $i=1,\ldots,n$, let $\Gamma_i$ be a geodesic path in $\Pnm$ between $P_{i-1}$ and $P_i$. Additionally, let $\gamma_i$ be a curve that is in common between the pants decompositions $P_{i-1}$ and $P_i$, which must exist since $P_{i-1}$ and $P_i$ are adjacent in $\hat{\Pnm}$. Notice that since $\gamma_i\in P_{i-1} \cap P_i$, we have $P_{i-1},P_i \in \Pnm^{0}(\gamma_i)$. 
 It follows that each geodesic $\Gamma_i$ must be entirely contained in $\Pnm(\gamma_i)$, since $\Pnm(\gamma_i)$ is a subtree of $\Pnm$, by Lemma~\ref{L:subgraph is a tree}.

Consider the path $\Gamma_1\cup \Gamma_2$. Since $\Pnm$ is a tree, this is either already a geodesic between $P_0$ and $P_2$, or there is some backtracking. Let $L_1$ be the point in $\Gamma_1 \cap \Gamma_2$ that is furthest from $P_1$ in $\Pnm$, or let $L_1=P_1$ in the case that $\Gamma_1\cup \Gamma_2$ is already a geodesic. Notice that since 
\[  L_1\in \Gamma_1\cap \Gamma_2 \subset \Pnm(\gamma_1)\cap \Pnm(\gamma_2)\]
and since $P_0\in \Pnm(\gamma_1)$ and $P_2\in\Pnm(\gamma_2)$, it follows that there are edges in $\hat{\Pnm}$ from $P_0$ to $L_1$ and from $L_1$ to $P_2$. Additionally, if $\Gamma_1'$ is the geodesic path in $\Pnm$ from $P_0$ to $L_1$ and $\Gamma_2'$ is the geodesic path from $L_1$ to $P_2$, then by construction $\Gamma_1'\cup \Gamma_2'$ has no backtracking and is thus a geodesic in $\Pnm$ from $P_0$ to $P_2$. 

For each subsequent $P_i$ with $2 \leq i \leq n-1$, we proceed similarly: replacing $P_i$ with the vertex $L_i\in \Gamma_i'\cap \Gamma_{i+1}$ that is furthest from $P_i$, replacing $\Gamma_i'$ with the geodesic $\Gamma_i''$ in $\Pnm$ from $L_{i-1}$ to $L_i$, and replacing $\Gamma_{i+1}$ with the geodesic $\Gamma_{i+1}'$ in $\Pnm$ from $L_i$ to $P_{i+1}$. We note that there can never be any overlap between any $\Gamma_j$ and $\Gamma_k'$ for $j-k \geq2$ because otherwise there would be some vertex adjacent to both $P_{k-1}$ and $P_j$, implying $j-(k-1) = d(P_{k-1},P_j) \leq 2$, which is a contradiction.

In this manner, we have constructed a geodesic 
\[ \Gamma= \Gamma_1' \cup \Gamma_2'' \cup \Gamma_3''\cup \cdots \cup \Gamma_{n-1}'' \cup\Gamma_n'\]
in $\Pnm$ between $A$ and $B$, and replaced the original geodesic in $\hat{\Pnm}$ between $A$ and $B$ with a new one consisting of consecutively ordered vertices
\[ P_0, L_1,L_2,\cdots, L_{n-1}, P_n.\]
Because $\Gamma$ is a geodesic in $\Pnm$ between two points in $\Pnm_G$, and because $\Pnm_G$ is a subtree of $\Pnm$, $\Gamma$ is a geodesic remaining entirely inside $\Pnm_G$. Moreover, by Lemma \ref{lem:boundedint}, each segment $\Gamma_1',\Gamma_2'',\dots,\Gamma_{n-1}'', \Gamma_n'$ has length at most $M$. Therefore, $\frac{1}{M}d_{\Pnm_G}(A,B)\leq \hat{d}(A,B)$, and thus the inclusion $\iota$ is a quasi-isometric embedding.

Because $\iota$ is an inclusion, for any $g\in G$, $g\cdot \iota(A)=g\cdot A = \iota(g\cdot A)$. Thus, $\iota$ is $G$-equivariant. This completes the proof.
\end{proof}

\begin{cor}\label{cor:orbitqie}
    If $G< \hh$ is purely pseudo-Anosov and finitely generated, then the orbit map of $G$ into $\Dtwo$ is a quasi-isometric embedding.
\end{cor}
\begin{proof}
As noted above, we may assume without loss of generality that $G$ is torsion free. By Proposition~\ref{prop:free-qi}, any orbit map from $G$ to $\Pnm_G$ is a quasi-isometry, while by Proposition~\ref{prop:subtreeqi}, the inclusion $\Pnm_G\into \hat{\Pnm}$ is a 
 $G$-equivariant quasi-isometric embedding. Combining this with the fact that $\hat{\Pnm}$ is coarsely $G$-equivariantly quasi-isometric to $\Dtwo$, by Proposition~\ref{prop:conediskqi}, we see that any orbit map of $G$ to $\Dtwo$ is a quasi-isometric embedding.
\end{proof}
We note that combining this corollary with \cite[Theorem~1.1]{chesser-stable} also gives us the following corollary.
\begin{cor}\label{cor:stable}
    If $G\leq \hh$ is purely pseudo-Anosov and finitely generated, then $G$ is stable in $\hh$.
\end{cor}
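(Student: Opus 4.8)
The plan is to obtain this immediately by combining Corollary~\ref{cor:orbitqie} with the disk-graph characterization of stable subgroups of $\hh$ established in \cite[Theorem~1.1]{chesser-stable}, which says that a finitely generated subgroup $H < \hh$ is stable in $\hh$ precisely when an orbit map $H \to \Dtwo$ is a quasi-isometric embedding. In particular, all of the geometric work has already been done in the propositions above: Corollary~\ref{cor:orbitqie} supplies exactly the hypothesis needed to run the relevant implication of that theorem.

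Before invoking it, I would dispatch the torsion-free hypothesis the same way it is handled at the start of Section~\ref{sec:main}. A finitely generated $G < \hh$ has a finite-index, torsion-free subgroup $G_0$ by \cite{Ivanov}, which is again purely pseudo-Anosov, and $G_0$ is at finite Hausdorff distance from $G$ inside $\hh$; since stability is invariant under passing between a subgroup and a finite-index subgroup or supergroup (just as with convex cocompactness, noted at the start of this section), it suffices to treat $G_0$. Renaming, we may assume $G$ is torsion free, so that Corollary~\ref{cor:orbitqie} applies verbatim and gives that an orbit map $G \to \Dtwo$ is a quasi-isometric embedding; then \cite[Theorem~1.1]{chesser-stable} yields that $G$ is stable in $\hh$.

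The only thing requiring care — the ``main obstacle,'' such as it is — is checking that the input produced here lines up with the precise statement of \cite[Theorem~1.1]{chesser-stable}: that the hyperbolic space appearing there is $\Dtwo$, that the condition is phrased as an orbit map being a quasi-isometric embedding, and that no auxiliary hypothesis (for instance, $G$ being undistorted in $\hh$) is needed beyond what Corollary~\ref{cor:orbitqie} already provides. Any discrepancy between $\Dtwo$ and the closely related graph $\ND$ is harmless, since $\ND \into \Dtwo$ is an $\hh$-equivariant quasi-isometry by \cite[Proposition~5.1]{chesser-stable}. With these compatibilities in place, the corollary follows with no further argument.
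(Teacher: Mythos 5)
Your proposal matches the paper's argument exactly: the paper derives this corollary by combining Corollary~\ref{cor:orbitqie} with \cite[Theorem~1.1]{chesser-stable}, which is precisely your route. Your additional care in reducing to a torsion-free finite-index subgroup before invoking Corollary~\ref{cor:orbitqie} is a reasonable (and slightly more explicit) handling of a step the paper leaves implicit.
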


\subsection{Distance formulas and convex cocompactness} \label{S:distance formulas}

The next proposition provides the final ingredient for the proof of Theorem~\ref{thm:main}.
\begin{prop}\label{prop:undistorted}
    If $G< \hh$ is purely pseudo-Anosov and finitely generated, then $G$ is quasi-isometrically embedded in $\mcg(S)$.
\end{prop}

The proof of this proposition relies on the distance formulas for the mapping class group and the disk graph. The mapping class group distance formula was discovered by Masur and Minsky \cite{MR1791145}, while the formula for the disk graph was proved by Masur and Schleimer \cite{MR2983005}. In order to state these, we will briefly recall the relevant ideas and terminology, but refer the reader to \cite{MR1791145} for full details.

Given a (multi)curve $\alpha$ and subsurface $Y \subset S$, $\pi_Y(\alpha)$ denotes the subsurface projection of $\alpha$ to the curve graph of $Y$.  For curves $\alpha$ and $\beta$, $\pi_{\beta}(\alpha)$ will refer to the projection of $\alpha$ to the curve graph of an annulus with core curve $\beta$.  All curves and subsurfaces are considered up to isotopy, and representatives of the isotopy classes are assumed to be in minimal position.

Given a curve $\alpha$ on $S$, a \emph{clean transverse curve} for $\alpha$ is a curve $\beta$ in $S$ whose regular neighborhood is either a $1$-holed torus or a $4$-holed sphere. A \emph{complete, clean marking} $\mu$ is a set of pairs $\{ (\alpha_i, t_i)\}$ such that $\operatorname{base}(\mu)=\{\alpha_i\}$ forms a maximal simplex in the curve graph (i.e.~$\{\alpha_i\}$ is a pants decomposition) called the {\em base of $\mu$}. For each $i$, $t_i = \pi_{\alpha_i}(\beta_i)$ for some clean transverse curve $\beta_i$ for $\alpha_i$, and $\operatorname{trans}(\mu) = \{t_i\}$ are called the {\em transversals of $\mu$}.
The \emph{marking graph} of $S$, denoted $\widetilde{\mathcal{M}}$, is a graph whose vertices represent complete clean markings on $S$ and for which edges represent two complete clean markings differing by an ``elementary move''; again, see Masur and Minsky \cite{MR1791145} for precise definitions.

We recall that the projection of a complete, clean marking $\mu$ to a subsurface $Y\subset S$ is defined as follows. When $Y$ is an annulus whose core curve is some $\alpha_i\in\operatorname{base}(\mu)$, then $\pi_Y(\mu) = t_i = \pi_Y(\beta_i)$. Otherwise, $\pi_Y(\mu) = \pi_Y(\operatorname{base}(\mu))$.  Given multicurves or markings $\mu,\mu'$ and a subsurface $X$ of $S$, we write
\[ d_X(\mu,\mu') = \operatorname{diam}(\pi_X(\mu),\pi_X(\mu')),\]
where the diameter is computed in the curve graph of $X$.

It is convenient to use two more pieces of notation. Given non-negative real numbers $A,B,C$ with $A \geq 1$, we write $B \qieq{A} C$ to mean $\frac{1}{A}B - A \leq C \leq AB+A$, and further define $[B]_A = B$ if $B \geq A$, and $[B]_A = 0$ otherwise.  Masur and Minsky's distance formula \cite{MR1791145} states that there is some $C_0 > 0$ such that for any $c\geq C_0$, there is some $A_0\geq 1$ such that
\[ d_{\widetilde{\mathcal{M}}}(\mu,\mu') \qieq{A_0} \sum_{X\in \mathbf{X}} [d_X(\mu,\mu')]_c,\]
where $\mathbf{X}$ is the set of all essential subsurfaces of $S$. The orbit map from $\mcg(S)$ to $\widetilde{\mathcal{M}}$ is a quasi-isometry (see \cite{MR1791145}), and thus for any $\mu \in \widetilde{\mathcal{M}}$ there is some constant $C_0$ such that for any $c\geq C_0$, there is a constant $A_0\geq 1$ such that for all $g,h \in \mcg(S)$,
\begin{equation} \label{E:mcg distance} d_{\mcg(S)}(g,h) \qieq{A_0} \sum_{X\in \mathbf{X}} [d_X(g \cdot \mu,h \cdot \mu)]_c.
\end{equation}


Masur and Schleimer \cite{MR2983005} proved an analogous distance formula for the disk graph. Specifically, there is some constant $C_1$ such that for any $c\geq C_1$, there is a constant $A_1\geq 1$ such that for all $\alpha,\beta \in \Dtwo$,
\begin{equation} \label{E:disk distance} d_{\Dtwo}(\alpha,\beta) \qieq{A_1} \sum_{X\in \mathbf{W}} [d_X(\alpha,\beta)]_c.
\end{equation}
Here $\mathbf{W} \subset \mathbf{X}$ denotes the set of \textit{witnesses} (also called \textit{holes}) for the disk graph. These are essential subsurfaces $X \subset S$ such that every representative of every meridian on $V_2$ has non-empty intersection with $X$. 

\begin{proof}[Proof of Proposition \ref{prop:undistorted}]
First, $G\leq \hh\leq \mcg(S)$, therefore $d_{\mcg(S)}(1,g)\leq d_G(1,g)$ for any $g\in G$ (assuming the generating set for $G$ is contained in that of $\mcg(S)$). It remains to bound $d_{\mcg(S)}(1,g)$ from below by a linear function of $d_G(1,g)$. For this, we use the distance formulas for the mapping class group and the disk graph, together with Corollary \ref{cor:orbitqie}.

By Corollary~\ref{cor:orbitqie}, for any $\alpha \in \Dtwo^0$, there exists $K\geq 1, C\geq 0$ such that 
\begin{equation} \label{E:group disk} \frac{1}{K}d_G(1,g) - C \leq d_{\Dtwo} (\alpha, g\cdot \alpha), \end{equation}
for all $g \in G$.  Now let $P$ be a pants decomposition of meridians, including $\alpha$, and choose a marking $\mu$ so that $\base(\mu) = P$.  Let $C_0$ and $C_1$ be the constants required for \eqref{E:mcg distance} and \eqref{E:disk distance}, respectively, let $c > \max\{C_0,C_1\}$; and let $A_0$ and $A_1$ be such that \eqref{E:mcg distance} and \eqref{E:disk distance} both hold.  Now, we observe that
\begin{align*}
    d_{\Dtwo} (\alpha,g\cdot \alpha) &\leq A_1 \sum_{X\in \mathbf{W}} [d_X(\alpha, g\cdot \alpha)]_c +A_1 \\
    & \leq A_1 \sum_{X \in \mathbf{W}} [d_X(\mu,g \cdot \mu)]_c + A_1 \leq A_1\sum_{X \in \mathbf{X}} [d_X(\mu,g \cdot \mu)]_c + A_1.
\end{align*}
The first inequality is from \eqref{E:disk distance}, while the second comes from the fact that $\alpha \in \base(\mu)$, (meridians are not cores of annular witnesses, so the transversals play no role when summing over $X \in \mathbf{W}$).  The third inequality follows since $\mathbf{W} \subset \mathbf{X}$.

Combining this inequality with \eqref{E:mcg distance}, we have
\[ d_{\Dtwo}(\alpha,g \cdot \alpha) \leq A_1(A_0 d_{\mcg(S)}(1,g) + A_0) + A_1.\]
Combined with \eqref{E:group disk}, we have
\[ \frac{1}{K A_0A_1} d_G(1,g) - \left(\frac{C+A_0A_1+A_1}{A_0A_1} \right) \leq d_{\mcg(S)}(1,g), \]
which is the required lower bound.   
\end{proof}

We finally prove the main theorem by applying the preceding proposition and a theorem of Bestvina, Bromberg, Kent, and the second author \cite{convexcocompact}.
\begin{proof}[Proof of Theorem \ref{thm:main}]
    Suppose we are given a finitely generated, purely pseudo-Anosov subgroup $G <\hh$. After passing to a finite index subgroup if necessary, we can assume that $G$ is torsion free.  By Proposition \ref{prop:undistorted}, $G$ is undistorted in $\mcg(S)$. By the Main Theorem of \cite{convexcocompact}, it follows that $G$ is convex cocompact.
\end{proof}

We note that the genus $2$ assumption was used to deduce Corollary~\ref{cor:orbitqie}, but from that point forward all the results we used in the proof are true for arbitrary genus $g$.  Specifically, the distance formulas from \cite{MR1791145,MR2983005} and the main result of \cite{convexcocompact} are valid for all genus $g \geq 2$.   We can thus hypothesize the conclusion of that Corollary~\ref{cor:orbitqie}, and the remainder of the proof goes through verbatim.  We include this statement as it may be of independent interest.

\begin{theorem} \label{T:general g}
Suppose $G < {\mathcal H}_g$ is a finitely generated, purely pseudo-Anosov subgroup of the genus $g$ handlebody group for any $g \geq 2$.  If the orbit map of $G$ to the disk graph ${\mathcal D}_g$ is a quasi-isometric embedding, then $G$ is convex cocompact. \qed
\end{theorem}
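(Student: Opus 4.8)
The plan is to run the proof of Proposition~\ref{prop:undistorted} and then of Theorem~\ref{thm:main} essentially verbatim, with the genus $2$ objects $\hh$ and $\Dtwo$ replaced by their genus $g$ analogues and with the present hypothesis playing the role of Corollary~\ref{cor:orbitqie}. Write $S = \bd V_g$. As at the start of Section~\ref{sec:main}, by Ivanov \cite{Ivanov} we may pass to a finite index torsion free subgroup: this changes neither the property of being purely pseudo-Anosov, nor the hypothesis (a finite index subgroup includes by a quasi-isometric embedding, and the composition of two quasi-isometric embeddings is a quasi-isometric embedding), nor whether $G$ is convex cocompact. So we may assume $G$ is torsion free.

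The main step is to prove that $G$ is undistorted in $\mcg(S)$. The inclusions $G \leq {\mathcal H}_g \leq \mcg(S)$ immediately give $d_{\mcg(S)}(1,g) \leq d_G(1,g)$ for all $g \in G$, so only the matching lower bound remains. Fix a meridian $\alpha$ of $V_g$ and extend it to a pants decomposition $P$ of $S$ all of whose components are meridians; such a $P$ exists for every $g \geq 2$ --- begin with the compressing disk bounded by $\alpha$, enlarge to a maximal collection of pairwise disjoint, pairwise non-isotopic compressing disks, and observe that the boundary curves of such a maximal collection form a pants decomposition of $S$ by meridians. Choose a marking $\mu$ with $\base(\mu) = P$. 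The hypothesis provides $K \geq 1$ and $C \geq 0$ with $\frac{1}{K} d_G(1,g) - C \leq d_{{\mathcal D}_g}(\alpha, g\cdot\alpha)$ for all $g \in G$. Now argue exactly as in the proof of Proposition~\ref{prop:undistorted}: the Masur--Schleimer disk-graph distance formula \cite{MR2983005} bounds $d_{{\mathcal D}_g}(\alpha, g\cdot\alpha)$ above, up to multiplicative and additive error, by $\sum_{X \in \mathbf{W}} [d_X(\alpha, g\cdot\alpha)]_c$ over the witnesses $\mathbf{W}$; since $\alpha \in \base(\mu)$ and no annular witness has a meridian as its core, $\pi_X(\mu) = \pi_X(\base(\mu)) \supseteq \pi_X(\alpha)$ for every $X \in \mathbf{W}$, so this sum is at most $\sum_{X \in \mathbf{W}} [d_X(\mu, g\cdot\mu)]_c \leq \sum_{X \in \mathbf{X}} [d_X(\mu, g\cdot\mu)]_c$, using $\mathbf{W} \subset \mathbf{X}$; and this last sum is, up to multiplicative and additive error, equal to $d_{\mcg(S)}(1,g)$ by the Masur--Minsky distance formula \cite{MR1791145} together with the quasi-isometry between $\mcg(S)$ and the marking graph. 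Chaining these inequalities produces $K' \geq 1$ and $C' \geq 0$ with $\frac{1}{K'} d_G(1,g) - C' \leq d_{\mcg(S)}(1,g)$, so $G$ is undistorted in $\mcg(S)$.

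Finally I would invoke the Main Theorem of \cite{convexcocompact}: a finitely generated, purely pseudo-Anosov, undistorted subgroup of the mapping class group of a closed surface of genus $\geq 2$ is convex cocompact. Applied to $G < \mcg(S)$, this gives the desired conclusion. I do not expect a genuine obstacle here: the argument above is a transcription of the genus $2$ proof from Proposition~\ref{prop:undistorted} onward, and the only points one must check do generalize are (i) every meridian of $V_g$ extends to a pants decomposition of $S$ by meridians, used to select $\base(\mu)$; (ii) an annular witness for ${\mathcal D}_g$ never has a meridian as its core, so the transversals of $\mu$ contribute nothing over $\mathbf{W}$; and (iii) the two distance formulas \cite{MR1791145, MR2983005} and the theorem of \cite{convexcocompact} hold for every genus $g \geq 2$ --- all of which are standard, as noted in the paragraph preceding the theorem statement.
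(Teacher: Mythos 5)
Your proposal is correct and matches the paper's intent exactly: the authors prove this theorem by observing that, once the conclusion of Corollary~\ref{cor:orbitqie} is taken as a hypothesis, the proofs of Proposition~\ref{prop:undistorted} and Theorem~\ref{thm:main} go through verbatim in genus $g$, since the distance formulas of \cite{MR1791145,MR2983005} and the main theorem of \cite{convexcocompact} hold for all $g \geq 2$. The extra details you verify (extending a meridian to a pants decomposition by meridians, and that annular witnesses never have meridian cores) are exactly the right sanity checks and are all standard.
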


\bibliographystyle{amsalpha}
\bibliography{bib}

\end{document}